\newtheorem{theorem}{Theorem}
\newtheorem{lemma}{Lemma}
\theoremstyle{definition}
\theoremstyle{remark}
\numberwithin{equation}{section}
\begin{document}
\title{ Finite presentation is a Morita invariant property }

%

\author{Adel Alahmadi}
\address{Department of Mathematics, King Abdulaziz University, P.O.Box 80203, Jeddah, 21589, Saudi Arabia}
\email{analahmadi@kau.edu.sa}
\author{Hamed Alsulami}
\address{Department of Mathematics, King Abdulaziz University, P.O.Box 80203, Jeddah, 21589, Saudi Arabia}
\email{hhaalsalmi@kau.edu.sa}
%
%

\keywords{associative algebra, finitely presented algebra,}

\maketitle

\begin{abstract}
We prove that the property  of an algebra to be finitely presented is Morita invariant.
\end{abstract}

\maketitle

\section{Introduction}

Let $F$ be a field. Throughout the paper we consider associative unital $F-$ algebras. Two algebras $A$ and $B$ are called
Morita equivalent if their categories of left modules are equivalent. We say that a property $P$ is Morita invariant if any two Morita equivalent algebras do  satisfy or do not satisfy $P$  simultaneously.

\medskip

In \cite{MS} S. Montgomery and L. Small proved that finite generation is a Morita invariant property. In this note we show that the property of having a finite presentation is also Morita invariant.

\section{ Main Result}

\begin{theorem}\label{thm1} Finite presentation is a Morita invariant property.

\end{theorem}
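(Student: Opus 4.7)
The plan is to use the structural characterization of Morita equivalence: $A\sim B$ if and only if $B\cong eM_n(A)e$ for some $n\geq 1$ and some full idempotent $e\in M_n(A)$ (that is, with $M_n(A)eM_n(A)=M_n(A)$). It therefore suffices to establish (1) that $A$ finitely presented implies $M_n(A)$ finitely presented, and (2) that if $R$ is finitely presented and $e\in R$ is a full idempotent then so is $eRe$. Claim (1) is routine: given a presentation $A=F\langle x_1,\dots,x_m\rangle/(f_1,\dots,f_l)$, one presents $M_n(A)$ on the finite generating set $\{E_{ij}\}\cup\{\bar x_s\}$ of matrix units and scalar images, subject to the matrix-unit relations, the commutations $\bar x_s E_{ij}=E_{ij}\bar x_s$, and the original $f_t(\bar x)=0$.

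For (2), I would first recover finite generation of $B=eRe$ following Montgomery--Small. Writing $1=\sum_{i=1}^{n} a_i e b_i$ in $R$ and (after enlarging the generating set of $R$) assuming $e,a_i,b_j$ are themselves among the generators $y_1,\dots,y_k$, repeated insertion of $1=\sum a_i e b_i$ between consecutive letters of a monomial gives the identity
\[
ey_{s_1}\cdots y_{s_p}e=\sum_{i_0,\dots,i_p}(ea_{i_0}e)(eb_{i_0}y_{s_1}a_{i_1}e)\cdots(eb_{i_{p-1}}y_{s_p}a_{i_p}e)(eb_{i_p}e),
\]
exhibiting $B$ as generated by the finite set $H=\{u_i,v_j,g^s_{i,j}\}$ where $u_i:=ea_ie$, $v_j:=eb_je$, $g^s_{i,j}:=eb_iy_sa_je$. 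Let $\phi\colon F\langle H\rangle\twoheadrightarrow B$ be the resulting surjection. The crucial observation is that the substitution $y_s\mapsto G^s:=(g^s_{i,j})_{i,j}\in M_n(F\langle H\rangle)$ extends to an $F$-algebra homomorphism $\tau\colon F\langle y_1,\dots,y_k\rangle\to M_n(F\langle H\rangle)$, and for each word $w$ in the $y$'s the $(i,j)$-entry of $\phi\circ\tau(w)$ equals $eb_iwa_je$ in $B$, precisely because $\sum_k a_k e b_k=1$ collapses the intermediate matrix products.

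My candidate finite list of relations in $F\langle H\rangle$ is then: (i) the idempotency relation $\sum_i u_iv_i-1$ (encoding $1_B=e$); (ii) the $n^2$ entries of $\tau(w_\alpha)$ for each defining relation $w_\alpha$ of $R$; and (iii) finitely many relations identifying each $u_i$ and each $v_j$ with the insertion-formula expressions for $ea_ie$ respectively $eb_je$. All of these manifestly lie in $\ker\phi$, and the main obstacle is proving that they generate $\ker\phi$ as a two-sided ideal. The strategy I would pursue is to set $\tilde B:=F\langle H\rangle/I$ for the ideal $I$ generated by (i)--(iii), observe that $\tau$ descends to $\bar\tau\colon R\to M_n(\tilde B)$, and construct an inverse to the canonical surjection $\tilde B\twoheadrightarrow B$ via the corner projection $M\mapsto\sum_{i,j}u_i M_{i,j}v_j$ applied to $\bar\tau$, where relation (i) is what makes this corner reconstruction well-behaved. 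A structurally cleaner alternative is to invoke the fact that $A$ is finitely presented as an $F$-algebra if and only if the kernel bimodule $\Omega_A:=\ker(A\otimes_F A\to A)$ is finitely presented as an $A\otimes_F A^{\mathrm{op}}$-module, and then transport this property along the induced Morita equivalence $A\otimes_F A^{\mathrm{op}}\sim B\otimes_F B^{\mathrm{op}}$: applying the Morita functor to $0\to\Omega_A\to A\otimes_F A\to A\to 0$ and comparing with $0\to\Omega_B\to B\otimes_F B\to B\to 0$ via Schanuel's lemma yields the desired finite presentation of $\Omega_B$, hence of $B$.
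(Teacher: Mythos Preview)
Your reduction to (1) and (2) is the right framework, and (1) is fine. The gap is in (2). In your explicit-presentation approach, the ``inverse'' $b\mapsto \sum_{i,j}u_i[\bar\tau(b)]_{ij}v_j$ is only multiplicative on $B$ if $\bar\tau(e)=vu$ in $M_n(\tilde B)$; concretely, writing $e=y_{s_0}$, this needs the relations $g^{s_0}_{i,j}=v_iu_j$ in $\tilde B$, and analogous identifications whenever $y_s$ equals one of the distinguished generators $e,a_k,b_k$. None of these are contained in your (i)--(iii), so as stated the relations are not sufficient and the inverse construction does not go through. Your alternative via $\Omega_A$ rests on the equivalence ``$A$ f.p.\ as an algebra $\Leftrightarrow$ $\Omega_A$ f.p.\ as an $A\otimes A^{\mathrm{op}}$-module,'' which you assert but do not prove; this is not an immediate fact (recall that $\Omega_A$ is already finitely \emph{generated} as a bimodule whenever $A$ is finitely generated, so the content is entirely in the relations), and the subsequent step that the Morita equivalence $A^e\sim B^e$ carries the bimodule $A$ to the bimodule $B$ also needs an argument.

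The paper avoids proving (2) in this generality. Instead it replaces $B$ by $B'=M_2(B)$, which is still Morita equivalent to $A$, so $B'\cong eM_n(A)e$ for some full idempotent $e$. Now $B'$ carries matrix units $e_{11},e_{22}$ with $e_{11}+e_{22}=e$; since $Be_{11}B=B=Be_{22}B$ and $e$ is full, both $e_{11}$ and $1-e_{11}$ are full in $M_n(A)$. At this point the paper invokes its companion result from \cite{AA}: if $R$ is f.p.\ and \emph{both} $e$ and $1-e$ are full, then $eRe$ is f.p. This gives $e_{11}M_n(A)e_{11}\cong C$ f.p., hence $B'=M_2(C)$ f.p.\ by Lemma~1, hence $B$ f.p.\ by the other direction of Lemma~1. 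The $M_2$ trick is precisely what manufactures the fullness of $1-e_{11}$ and lets one fall back on the already-established special case, rather than attacking the general corner problem head-on.
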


\medskip

\par Let's recall some definitions.
Let $A$ be an $F$- algebra generated by a finite collection of elements $a_1,\cdots ,a_m.$ Consider the free associative algebra $F\langle x_1,\cdots, x_m\rangle$
and the homomorphism  $$F\langle x_1,\cdots, x_m\rangle \xrightarrow{\varphi} A, x_i \rightarrow a_i, 1 \leq i \leq m.$$ We say that the algebra $A$ is finitely presented (f.p.) if the ideal $I= \ker\varphi$ is finitely generated as an ideal. This property does not depend on a choice of a generating system of $A$ as long as this system is finite. If the ideal $I$ is generated by elements $f_1(x_1,\cdots, x_m), \cdots, f_s (x_1,\cdots, x_m)$ then we say that the algebra $A$ has presentation $$A=\langle x_1,\cdots, x_m | f_1=0,\cdots, f_s=0\rangle.$$

It is easy to see that the ideal $I$ is not finitely generated if and only it is a union of a strictly ascending chain of ideals $I_1\subset I_2\subset\cdots$ of the algebra $F\langle x_1,\cdots, x_m\rangle$. Equivalently, the algebra $A$ is not finitely presented if and only if there exists a sequence of $m$- generated algebras $A_i$ and epimorphisms $A_i \xrightarrow{\varphi_{i}} A_{i+1}, \ker \varphi_i\neq (0), i\geq 1$ , such that $A$ is the inductive limit of this sequence.

More precisely, consider the subalgebra $\hat{A}=\{\{a_i\}_{i\geq1} | a_i\in A_i, a_{i+1}=\varphi_i(a_i),i\geq 1\}$ of the Cartesian product $\sqcap_{i\geq 1}A_i.$ Let $J$ be the ideal of the algebra $\hat{A}$ that consists of sequences $\{a_i\}_{i\geq1}$ such that $a_i=0$ for all sufficiently large $i.$ Then $A\cong \hat{A}/J.$

\medskip

\begin{lemma}\label{lem1}
Let $A$ be a finitely generated algebra and let $M_n(A)$ be the algebra of $n \times n$ matrices over $A$. The algebra $M_n(A)$ is f.p. if and only if the algebra $A$ is f.p.
\end{lemma}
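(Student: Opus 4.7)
The plan is to prove both implications of the equivalence. The forward direction (``$A$ f.p.~$\Rightarrow$ $M_n(A)$ f.p.'') is a direct presentation-building argument, while the reverse direction is most cleanly handled by contraposition, using the inductive limit characterization of non-finite-presentation set up in the preamble.

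For the forward direction, I would use the isomorphism $M_n(A)\cong A\otimes_F M_n(F)$. If $A=\langle x_1,\dots,x_m\mid f_1,\dots,f_s\rangle$, then I would present $M_n(A)$ on the generators $y_1,\dots,y_m$ (standing for $x_i\otimes 1$) together with the $n^2$ matrix units $e_{ij}$, imposing: the relations $f_r(y_1,\dots,y_m)=0$ for $1\le r\le s$; the standard matrix-unit relations $e_{ij}e_{kl}=\delta_{jk}e_{il}$ and $\sum_i e_{ii}=1$; and the commutations $y_ke_{ij}=e_{ij}y_k$ for all $i,j,k$. This is a finite list, so $M_n(A)$ is finitely presented.

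For the reverse direction, I would argue the contrapositive. Assuming $A$ is not f.p., the characterization stated in the preamble gives a chain $A_1\twoheadrightarrow A_2\twoheadrightarrow\cdots$ of $m$-generated algebras with $\ker\varphi_i\neq 0$ and $A=\varinjlim A_i$, realized concretely as $\hat A/J$. Applying $M_n(\,\cdot\,)$ entrywise gives epimorphisms $M_n(\varphi_i)\colon M_n(A_i)\to M_n(A_{i+1})$ whose kernels are $M_n(\ker\varphi_i)\neq 0$. Each $M_n(A_i)$ is generated by the $m$ images of the generators of $A_i$ (placed, say, in the $(1,1)$ entry, or better as $a\otimes 1$) together with the $n^2$ matrix units of $M_n(F)$, giving a uniform bound of $m+n^2$ generators. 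Finally, since $M_n$ commutes with the construction of $\hat A/J$ (the entrywise Cartesian product and eventually-zero ideal behave well with $M_n$), we get $M_n(A)=\varinjlim M_n(A_i)$. Applying the same characterization in reverse shows $M_n(A)$ is not finitely presented.

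The main technical obstacle is verifying that the inductive-limit representation is preserved by the matrix functor with a \emph{uniform} bound on the number of generators; once that is checked, the argument is a mechanical transfer through the equivalence ``not f.p.~$\Leftrightarrow$ strict ascending chain of ideals $\Leftrightarrow$ nontrivial inductive system of boundedly-generated quotients.'' A minor care-point in the forward direction is that one must actually include the commutation relations between the algebra generators and the matrix units, since otherwise the quotient would be a free product rather than a tensor product, but this is still only finitely many relations.
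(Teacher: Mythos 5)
Your proposal is correct and follows essentially the same route as the paper: the forward direction by writing down a finite presentation of $M_n(A)$ from one of $A$ by adjoining the $n^2$ matrix units and finitely many structural relations (the paper embeds $A$ into the $(1,1)$-corner via $x_i=y_{11}x_iy_{11}$ instead of using commuting generators $a\otimes 1$, but either presentation works), and the reverse direction by the contrapositive, applying $M_n(\,\cdot\,)$ entrywise to the inductive system of $m$-generated algebras with non-injective epimorphisms. No substantive difference.
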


\begin{proof}
Let $A=\langle x_1,\cdots, x_m | f_1 (x_1,\cdots, x_m)=0,\cdots, f_s (x_1,\cdots, x_m)=0\rangle $ be a finite presentation of the algebra $A$. Then the matrix algebra $M_n(A)$ has a presentation

$$M_n(A)=\langle x_1,\cdots, x_m, y_{ij}, 1 \leq i,j \leq n\hspace{.1cm} |\hspace{.1cm} y_{ij}y_{kl}=\delta_{jk}y_{il},$$ $$x_i=y_{11}x_{i}~y_{11}, f_t(x_1,\cdots, x_m)=0, 1\leq i,j,k,l \leq n, 1 \leq t \leq s \rangle.$$

\medskip

Suppose on the other hand that the algebra $A$ is not finitely presented. Then there exists a sequence of $m$- generated algebras $A_i$ and epimorphisms \newline $A_i \xrightarrow{\varphi_{i}} A_{i+1},\ker\varphi_i \neq (0), i \geq 1,$ with $A$ as the inductive limit.

An epimorphism $\varphi_i$ gives rise to the epimorphism $M_n(A_i)\rightarrow M_n(A_{i+1})$ with nonzero kernel. The inductive limit of this system of homomorphisms is $M_n(A)$ which implies that the algebra $M_n(A)$ is not finitely presented. This completes the proof of the Lemma.

\end{proof}

\medskip

An idempotent $e$ of an algebra $A$ is said to be full if $AeA=A$. It is known (see \cite{L}, prop.18.33) that algebras $A, B$ are Morita equivalently if and only if $B \cong eM_n(A)e$, where $n \geq 1$ and $e$ is a full idempotent of the matrix algebra $M_n(A)$.

In \cite{AA}, we proved that if $A$ is a f. p. algebra and $e, 1-e \in A$ are full idempotents then the Peirce component $eAe$ is f.p.

If we drop the assumption that the idempotent $1-e$ is full then Theorem $1$ follows immediately. We will drop it first for the particular case when $eAe$ is isomorphic to the algebra of $n\times n$ matrices over some algebra, $n\geq 2$.
\medskip

\begin{lemma}\label{lem2}
Let $A$ be a f.p. algebra with a full idempotent $e$. Suppose that $B=eAe\cong M_n(C), n\geq 2$, for some associative algebra $C$. Then the algebra $B$ is f. p.
\end{lemma}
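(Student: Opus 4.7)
The plan is to exploit the matrix structure of $B$ by choosing a smaller idempotent inside $e$ whose Peirce corner is $C$, and then apply the previously established result from \cite{AA} together with Lemma~\ref{lem1}.

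More precisely, identify $B = eAe$ with $M_n(C)$ and let $\{e_{ij}\}_{1\le i,j\le n}$ be the standard matrix units of $M_n(C)$, so that $e = e_{11}+e_{22}+\cdots+e_{nn}$. Set $f := e_{11}$, viewed as an idempotent of $A$. The Peirce corner $fAf$ equals $e_{11}(eAe)e_{11} \cong e_{11}M_n(C)e_{11} \cong C$, so it suffices by Lemma~\ref{lem1} to show that $C \cong fAf$ is finitely presented. For that, the key point is to verify that both $f$ and $1-f$ are full idempotents of $A$, for then the result of \cite{AA} applies directly to the f.p.\ algebra $A$ and the idempotent $f$, giving that $fAf$ is f.p.

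Fullness of $f$ is straightforward: for each $i$, the relation $e_{ii} = e_{i1}e_{11}e_{1i}$ shows $e_{ii}\in Ae_{11}A$, so $e = \sum_i e_{ii} \in AfA$, and since $e$ is full in $A$ we get $A = AeA \subseteq AfA$. For fullness of $1-f$, first observe that $(1-e)(1-f) = 1-e$ (using $ef = f$), so $1-e \in A(1-f)A$; similarly $(e-f)(1-f)=e-f$, so $e_{22},\ldots,e_{nn} \in A(1-f)A$. Here the hypothesis $n\ge 2$ is essential: using $n\ge 2$, one has $e_{11} = e_{12}e_{22}e_{21} \in A(1-f)A$, and therefore $e = \sum_i e_{ii}\in A(1-f)A$, which combined with $1-e \in A(1-f)A$ gives $1 = e + (1-e) \in A(1-f)A$, i.e.\ $1-f$ is full.

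Once both $f$ and $1-f$ are known to be full, the main theorem of \cite{AA} yields that $fAf \cong C$ is finitely presented, and Lemma~\ref{lem1} then promotes this to $M_n(C) \cong B$ being finitely presented. The only non-routine step is the verification that $1-f$ is full, which is exactly where the assumption $n\ge 2$ enters; everything else is bookkeeping with matrix units and Peirce decompositions.
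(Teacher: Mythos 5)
Your proposal is correct and follows essentially the same route as the paper: pass to the idempotent $e_{11}$ inside $B\cong M_n(C)$, verify (using $n\ge 2$) that both $e_{11}$ and $1-e_{11}$ are full in $A$, invoke the result of \cite{AA} to get $e_{11}Ae_{11}\cong C$ finitely presented, and then apply Lemma~\ref{lem1} to conclude for $B\cong M_n(C)$. Your fullness verifications are in fact a bit more explicit than the paper's (which goes via $A(1-e_{11})A \supseteq Be_{22}B = B$ and leaves the final step to fullness of $e$ implicit), but the argument is the same.
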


\begin{proof}
Let $e_{ij}, 1 \leq i,j \leq n$, be matrix units of the algebra $B$, $\sum\limits_{i=1}^{n }e_{ii}=e$. Consider the idempotent $e_{11}$. We have $Ae_{11}A \supseteq Be_{11}B=B$. This implies that the idempotent $e_{11}$ is full. On the other hand $(1-e_{11})e_{22}=e_{22},$ hence $ A(1-e_{11})A \supseteq Be_{22}B=B$, which implies that the idempotent $1-e_{11}$ is full as well. By [AA] the algebra $e_{11}Ae_{11} \cong C$ is f.p. Hence by Lemma 1 the algebra $B$ is f. p. as well, which completes the proof of the Lemma.

\end{proof}

\medskip

\begin{proof}[ Proof of Theorem \ref{thm1}]
Let $A, B$ be Morita equivalent algebras. Suppose that the algebra $A$ is f. p. The algebras $A, B^{'}=M_{2}(B)$ are also Morita equivalent. There exists $n \geq 1$ and a full idempotent $e$ of the matrix algebra $M_n(A)$ such that $B^{'}\cong e M_n(A^{'}e)$. By Lemma 2 the algebra $B^{'}$ is f. p. Hence by Lemma 1 the algebra $B$ is f. p. as well, which completes the proof of theorem 1.
\end{proof}

\section*{Acknowledgement}
This project was funded by the Deanship of Scientific Research (DSR), King Abdulaziz University, under Grant No.
(27-130-36-HiCi). The authors, therefore, acknowledge technical and financial support of KAU.

\end{document}